\newcites{suppl}{References} 
\newcommand{\eps}{{\varepsilon}}
\renewcommand{\phi}{\varphi}
\newcommand{\R}{\mathbb{R}}
\newcommand{\Z}{\mathbb{Z}}
\newcommand{\N}{\mathbb{N}}
\newcommand{\pr}{\mathbb{P}}       
\newcommand{\ex}{\mathbb{E}}
\newcommand{\Ec}{\mathcal{E}}
\newcommand{\Oc}{\mathcal{O}}
\newcommand{\diff}{{\,\mathrm{d}}}
\newcommand{\C}{V}   
\DeclareMathOperator*{\argmin}{argmin}
\newtheorem{assumption}[theorem]{Assumption}
\begin{document}
	
\title{A Note on Local Linear Regression for Time Series in Banach Spaces}

\author{\name Florian Heinrichs \email f.heinrichs@fh-aachen.de \\
	\addr FH Aachen\\
	Heinrich-Mußmann-Straße 1\\
	52428 Jülich, Germany}

\editor{-}

\maketitle

\begin{abstract}%
	This note extends local linear regression to Banach space-valued time series for estimating smoothly varying means and their derivatives in non-stationary data. The asymptotic properties of both the standard and bias-reduced Jackknife estimators are analyzed under mild moment conditions, establishing their convergence rates. Simulation studies assess the finite sample performance of these estimators and compare them with the Nadaraya-Watson estimator. Additionally, the proposed methods are applied to smooth EEG recordings for reconstructing eye movements and to video analysis for detecting pedestrians and abandoned objects.
\end{abstract}

\begin{keywords}
	Local linear regression, Functional time series, Non-stationary time series, Kernel smoothing
\end{keywords}

\maketitle 

\section{Introduction} \label{sec:intro}

Many physical processes are smooth in time, space or more abstract arguments, such as the frequency in a spectral density. Since smooth processes occur in many areas, there are different approaches to their analysis. In statistics, on the one hand, functional data analysis deals with smooth data, and on the other hand, the framework of local stationarity allows an analysis of time-dependent data whose behavior changes smoothly. A natural combination of functional data and time series analysis leads to functional time series, in which each observation at a point in time corresponds to a (continuous) function. One example of such data are videos, with each observation being a function of the $x$- and $y$-coordinate. Another example of functional time series arises in meteorology, when the temperature as a function of space is studied over time.
 
Depending on the type of functional observation, the time series takes values in a different space. We are often interested in $C([0, 1])$, the space of continuous functions, or $L^2([0, 1])$, the space of square integrable functions \citep{bucher2020}. For convenience, we rescale the domain of the functional observations to the unit interval $[0, 1]$. Sometimes we are also interested in more general spaces, e.\,g., the space of $k$-times continuously differentiable functions $C^k([0, 1])$, or spaces of $d$-dimensional functions, such as $\big(C([0, 1])\big)^d$ or $\big(L^2([0, 1])\big)^d$. All these spaces are Banach spaces, so in the following we consider time series in a Banach space $\C$ with norm $\|\cdot\|$. Note that this setting also includes univariate and multivariate time series with $\C=\R$ or $\C=\R^n$.
 
For a time series $X_1, \dots, X_n\in \C$, consider the location scale model
\begin{equation}\label{eq:model}
	X_{i}=\mu\big(\tfrac{i}{n}\big) + \eps_{i}\quad i=1,\dots, n,
\end{equation}
where $\mu:[0, 1]\to\C$ denotes the unknown mean operator and $(\eps_{i})_{i\in\Z}$ is a centered error process in $\C$. We are interested in estimating the mean operator $\mu$ and, for sufficiently smooth $\mu$, also its derivative(s). Estimating the mean operator is often only the first step in a statistical analysis of functional time series. With the estimator, we can then test the time series for outliers, structural breaks or smooth changes \citep{bucher2021, heinrichs2025}. We can also use the mean value operator and its derivative(s) to predict the future course of the time series. In the following, we will generalize the commonly used local linear estimator to Banach space-valued time series and study its convergence. 

Similar regression problems of the form $\nu:\C\to \R$ have been studied extensively in the literature, and are referred to as ``scalar-on-function'' regression. Local linear regression was used for the estimation of $\nu$ under the assumption of i.i.d. observations \citep{benhenni2007, baillo2009, boj2010, barrientos2010, berlinet2011, ferraty2022, bhadra2024}. In the i.i.d. setting, local linear estimation was also used for the estimation of the conditional density and distribution function of a scalar random variable $Y$ given a functional observation $X$ \citep{demongeot2013, demongeot2014}. More recently, local linear regression has been studied for stationary time series \cite{demongeot2011, xiong2018, bouanani2024}, and was extended even to locally stationary functional time series \cite{kurisu2022}.

In a different line of work, \cite{benhenni2014} used local polynomial estimation to estimate $m:[0,1]\to \R$ under the model $Y_i(x_j) = m(x_j) + \eps_i(x_j)$, for grid points $(x_j)_{j=1}^N$ and i.i.d. observations of $Y_i$, for $i=1, \dots, n$. This is a special case of \eqref{eq:model}, where $\mu(i/n)\equiv m$ is constant (in time), and $m\in \C$. 

The problem of estimating $\mu$ in \eqref{eq:model} has been considered less in the literature. To the best of our knowledge, only \cite{kurisu2021} studied the Nadaraya-Watson estimator for locally stationary functional time series.

In the following, we extend the local linear estimator to Banach space-valued time series. In contrast to the Nadaraya-Watson estimator, this allows us to directly estimate $\mu$ and its derivative. We derive the convergence rate of the local linear estimator and its bias-reduced Jackknife version in Section \ref{sec:method}. Further, we study the finite sample properties of the estimators and compare them to the Nadaraya-Watson estimator in Section \ref{sec:sim_data}. In addition, we apply the estimators to smooth EEG recordings and reconstruct eye movements from the smoothed EEG data in Section \ref{sec:real_data}. In the same section, we use the smoothing methods to recognize pedestrians and abandoned objects in videos. The proof of the main results from Section \ref{sec:method} is deferred to Section \ref{sec:proof}.

\section{Methodology} \label{sec:method}

In the following, we introduce some notation and subsequently specify necessary regularity conditions. Let $V, W$ denote Banach spaces and $f:V \to W$ an operator between them. Then, the Gateaux differential of $f$ at $v\in V$ in direction $\psi\in V$ is defined as
\[ \diff f(v; \psi)= \lim_{\tau\to 0}\frac{f(v+\tau \psi)-f(v)}{\tau}. \]
If the Gateaux differential exists for any direction $\psi\in V$, $f$ is Gateaux differentiable. Note that the Gateaux differential does not need to be linear. A more restrictive notion for operators is Fréchet differentiability. The operator $f$ is Fréchet differentiable if a bounded linear operator $A:V\to W$ exists such that
\[ \lim_{\|h\|_V\to 0}\frac{\|f(v+h)-f(v)-Ah\|_W}{\|h\|_V} = 0. \]
In this case, $A$ is the Fréchet derivative of $f$ in $v$ and denoted by $Df(v)$.

Throughout, we assume $\mu$ to be Fréchet differentiable, which is further specified in Assumption \ref{assump:mu}. In order to estimate $\mu$, we use local linear regression. More specifically, define the local linear estimator of $\mu$ and its derivative as
\begin{equation}\label{eq:def}
	\Big(\hat{\mu}_{h_n}(t), \widehat{D\mu}_{h_n}(t)\Big) 
	= \argmin_{b_0, b_1\in \C} \bigg\| \sum_{i=1}^{n} \Big(X_{i} - b_0 - b_1 \big(\tfrac{i}{n}-t\big) \Big)^2 K\big(\tfrac{i-nt}{nh_n}\big) \bigg\|.
\end{equation} 

for some kernel $K$ and bandwidth $h_n\searrow 0$. To account for the bias of this estimator, we employ the Jackknife bias reduction technique proposed by \cite{schucany1977} and define
\[ \tilde{\mu}_n(t) = 2 \hat{\mu}_{h_n/\sqrt{2}}(t) - \hat{\mu}_{h_n}(t). \]
Similar, we reduce the bias of $\widehat{D\mu}_{h_n}(t)$ by defining
\[ \widetilde{D\mu}_n(t) = \tfrac{\sqrt{2}}{\sqrt{2}-1} \widehat{D\mu}_{h_n/\sqrt{2}}(t) - \tfrac{1}{\sqrt{2}-1} \widehat{D\mu}_{h_n}(t) \]

Note that for $\C=\R$, we obtain the usual local linear estimator, and for $\C=C([0, 1])$, the estimator defined in \eqref{eq:def} equals the pointwise defined local linear estimation, used in the literature \citep{bastian2025}.  

The performance of the estimators $\tilde{\mu}_n$ and $\widetilde{D\mu}_n$ depend on the properties of the kernel $K$, the mean operator $\mu$ and the errors $(\eps_i)_{i\in\Z}$. We now state regularity conditions to establish consistency of the estimators.

\begin{assumption} \label{assump}
	\begin{enumerate}
		\item \label{assump:kern}
		The kernel $K:\R\to\R$ is non-negative, symmetric and supported on $[-1, 1]$. It is twice differentiable, satisfies $\int_{-1}^1K(x)\diff x = 1$ and is Lipschitz continuous in an open set containing $[-1, 1]$.
		\item \label{assump:mu}
		The operator $\mu$ is twice Fréchet differentiable with Lipschitz continuous second derivative. 
		\item \label{assump:errors}
		The errors $(\eps_i)_{i\in\Z}$ have (jointly) bounded $p$-th moment, i.\,e., 
		\[ \sup_{i\in\Z} \ex[\|\eps_i\|^p ] \le C \]
		for some $p \ge 1$ and a constant $C\in\R$.
	\end{enumerate}
\end{assumption}

\begin{remark}
	Assumption \ref{assump} is rather mild. Note that the first part is essentially a design choice for the local linear estimator. The second condition ensures that $\mu$ and its derivative are sufficiently smooth to estimate them reasonably well and is common in the literature\citep{bucher2021, heinrichs2021}. The last part is satisfied for most distributions of interest. Note that the more moments exist, the better the estimators will be.
\end{remark}

Based on this assumption, we obtain the following uniform approximation of the Jackknife estimators $\tilde{\mu}_n$ and $\widetilde{D\mu}_n$.

\begin{theorem}\label{thm:lle}
	Let $h_n\searrow 0$ as $n\to\infty$ and define $K^*(x) = 2\sqrt{2}K(\sqrt{2}x)-K(x)$ and $\kappa_2 = \int_{-1}^1 x^2 K(x)\diff x$. Under Assumption \ref{assump}, it holds
	\[ \sup_{t\in [h_n, 1-h_n]} \bigg\| \tilde{\mu}_n(t) - \mu(t) - \frac{1}{nh_n} \sum_{i=1}^{n} \eps_i K^*\big(\tfrac{i-nt}{nh_n}\big)\bigg\| = \Oc(h_n^3) + \Oc_\pr\Big(\tfrac{1}{nh_n^{1+1/p}}\Big) \]
	and
	\[ \sup_{t\in [h_n, 1-h_n]} \bigg\| \widetilde{D\mu}_n(t) - D\mu(t) - \frac{1}{\kappa_2(\sqrt{2}-1)}\frac{1}{nh_n^2} \sum_{i=1}^{n} \big(\tfrac{i-nt}{nh_n}\big) \eps_i K^*\big(\tfrac{i-nt}{nh_n}\big)\bigg\| = \Oc(h_n^2) + \Oc_\pr\Big(\tfrac{1}{nh_n^{2+1/p}}\Big) \]
\end{theorem}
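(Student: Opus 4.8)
The plan is to reduce the theorem to an explicit, closed-form expression for the local linear estimators $\hat\mu_{h_n}(t)$ and $\widehat{D\mu}_{h_n}(t)$, and then perform a careful bias/variance-type decomposition on that expression. First I would solve the least-squares problem in \eqref{eq:def}. Because the objective factors as $\|\cdot\|$ of a $\C$-valued quadratic in $(b_0,b_1)$ whose coefficients are \emph{scalar} kernel weights, the minimizer coincides (componentwise, and hence in $\C$) with the classical local linear solution: setting $S_{n,j}(t)=\frac{1}{nh_n}\sum_{i=1}^n\big(\tfrac{i-nt}{nh_n}\big)^j K\big(\tfrac{i-nt}{nh_n}\big)$, one gets
\[
\hat\mu_{h_n}(t)=\frac{1}{nh_n}\sum_{i=1}^n w_i(t)\,X_i,\qquad
w_i(t)=\frac{S_{n,2}(t)-S_{n,1}(t)\big(\tfrac{i-nt}{nh_n}\big)}{S_{n,0}(t)S_{n,2}(t)-S_{n,1}(t)^2}\,K\big(\tfrac{i-nt}{nh_n}\big),
\]
and an analogous formula for $\widehat{D\mu}_{h_n}(t)$ with $(\tfrac{i-nt}{nh_n})$ replacing the constant in the numerator and a factor $1/h_n$. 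The main preliminary lemma is then a deterministic Riemann-sum estimate: for $t\in[h_n,1-h_n]$ the support condition on $K$ guarantees the local window lies inside $[0,1]$, and Lipschitz continuity of $K$ gives $S_{n,j}(t)=\int_{-1}^1 x^j K(x)\diff x+\Oc\big(\tfrac{1}{nh_n}\big)$ uniformly in $t$, with $S_{n,0}\to1$, $S_{n,1}\to0$, $S_{n,2}\to\kappa_2$ (odd moments vanishing by symmetry). This pins down the denominators and the leading shape of the weights.

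Next I would split $X_i=\mu(i/n)+\eps_i$ and treat the two resulting pieces separately. For the deterministic (bias) part, I insert the second-order Taylor/Fréchet expansion $\mu(i/n)=\mu(t)+D\mu(t)\big(\tfrac{i}{n}-t\big)+\tfrac12 D^2\mu(t)\big(\tfrac{i}{n}-t\big)^2+R_i$ with $\|R_i\|=\Oc\big(|\tfrac{i}{n}-t|^3\big)$ uniformly (using Assumption \ref{assump:mu} and Lipschitz continuity of $D^2\mu$, via the mean value inequality for Fréchet-differentiable maps). The local linear weights reproduce constants and linear terms exactly (the normal equations), so $\hat\mu_{h_n}(t)-\mu(t)=\tfrac12 D^2\mu(t)\,\mu_2(h_n)\,h_n^2+\Oc(h_n^3)$ for a suitable scalar moment $\mu_2(h_n)=\kappa_2+\Oc(\tfrac1{nh_n})$; crucially the leading $h_n^2$ coefficient is a \emph{fixed} element of $\C$ (namely $\tfrac{\kappa_2}{2}D^2\mu(t)[1]$) up to lower order, so the Jackknife combination $\tilde\mu_n=2\hat\mu_{h_n/\sqrt2}-\hat\mu_{h_n}$ annihilates it exactly — $2\cdot\tfrac{(h_n/\sqrt2)^2}{}-h_n^2\cdot(\cdots)$ cancels — leaving the $\Oc(h_n^3)$ term, which matches the first error term in the statement. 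The same mechanism with the weights for $\widehat{D\mu}_{h_n}$ (which reproduce the linear coefficient and kill the constant and quadratic, leaving an $\Oc(h_n^2)$ bias) gives the $\Oc(h_n^2)$ bias after the appropriately weighted Jackknife, with the coefficients $\tfrac{\sqrt2}{\sqrt2-1},\tfrac1{\sqrt2-1}$ arising precisely from solving for the cancellation of the $h_n$-order bias.

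For the stochastic part I collect the $\eps_i$ terms: $\hat\mu_{h_n}(t)$ contributes $\tfrac{1}{nh_n}\sum_i w_i(t)\eps_i$, and by the Riemann-sum lemma $w_i(t)=K\big(\tfrac{i-nt}{nh_n}\big)+\Oc\big(\tfrac1{nh_n}\big)\cdot K(\cdots)$ uniformly, so $\hat\mu_{h_n}(t)=(\text{bias terms})+\tfrac{1}{nh_n}\sum_i\eps_i K\big(\tfrac{i-nt}{nh_n}\big)+r_n(t)$ where $\|r_n(t)\|\le \tfrac{C}{(nh_n)^2}\sum_i\|\eps_i\|\,\mathbf 1\{|\tfrac{i-nt}{nh_n}|\le1\}$. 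Forming the Jackknife replaces $K$ by $K^*(x)=2\sqrt2 K(\sqrt2 x)-K(x)$ in the main stochastic term (this is the definition of $K^*$, chosen so that $\tfrac{1}{n(h_n/\sqrt2)}K(\tfrac{i-nt}{n h_n/\sqrt2})$ scaled by $2$ minus $\tfrac{1}{nh_n}K(\tfrac{i-nt}{nh_n})$ equals $\tfrac{1}{nh_n}K^*(\tfrac{i-nt}{nh_n})$), exactly matching the subtracted sum in the theorem. What remains is to bound $\sup_t\|r_n(t)\|$. Here I would not try for pathwise control but use a moment/Markov bound: since each window contains $\Oc(nh_n)$ indices and $\sup_i\ex\|\eps_i\|^p\le C$, one gets $\ex\big[\sup_t\|r_n(t)\|\big]$ — or rather a bound via a maximal inequality over an $\Oc(n)$-point grid in $t$ combined with Lipschitz continuity of $K$ in $t$ — of order $\tfrac{1}{(nh_n)^2}\cdot nh_n\cdot(nh_n)^{1-1/p}\cdot n^{1/p}$-type terms that collapse to $\Oc_\pr\big(\tfrac{1}{nh_n^{1+1/p}}\big)$; the exponent $1/p$ is the price of only having $p$ moments, entering through $\|\sum\|\eps_i\|\|_{?}$-type estimates or equivalently $\ex[\max_{i\le m}\|\eps_i\|]\le m^{1/p}C^{1/p}$. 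The derivative version carries an extra $1/h_n$ from differentiation, giving $\Oc_\pr\big(\tfrac{1}{nh_n^{2+1/p}}\big)$.

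\textbf{Main obstacle.} The genuinely delicate step is the uniform-in-$t$ control of the stochastic remainder $r_n(t)$ with only $p\ge1$ moments and no independence or mixing assumed. The bias and the explicit-form algebra are routine once the Riemann-sum lemma is in place, and the Jackknife cancellations are forced by the definitions of $K^*$ and the constants. But turning the per-point moment bound $\sup_i\ex\|\eps_i\|^p\le C$ into a $\sup_t$ statement of the stated order requires a chaining/grid argument: discretize $[h_n,1-h_n]$ at scale $\tfrac1n$, use that $w_i(\cdot)$ and $K(\tfrac{\cdot-nt}{nh_n})$ are Lipschitz in $t$ with constant $\Oc\big(\tfrac1{h_n}\big)$ (from Assumption \ref{assump:kern}) to pass from grid to sup, and apply Markov's inequality to $\ex\big[\max_{t\in\text{grid}}\|r_n(t)\|\big]\le(\#\text{grid})\cdot\max_t\ex\|r_n(t)\|$ — or a sharper $L^p$ union bound — to extract the $n^{1/p}$ factor that combines with the $\tfrac1{(nh_n)^2}\cdot nh_n$ prefactor to give exactly $\tfrac{1}{nh_n^{1+1/p}}$. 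I would also need to verify the leading stochastic sum $\tfrac1{nh_n}\sum_i\eps_i K^*(\cdot)$ is itself $\Oc_\pr$ of the right order (it is, by the same moment bound), so that subtracting it is legitimate and the theorem's left-hand side is genuinely smaller order than either piece alone.
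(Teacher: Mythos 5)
Your overall architecture matches the paper's: closed-form weights via the normal equations, a Riemann-sum lemma for $S_{n,j}(t)$, a second-order Taylor/Fréchet expansion with Lipschitz $D^2\mu$ for the bias, and the Jackknife combinations cancelling the leading bias while merging the kernels into $K^*$. (Your justification that the minimizer of \eqref{eq:def} is the classical solution ``componentwise'' is looser than the paper's argument — the paper verifies it by Gateaux-differentiating the norm-composed objective and invoking convexity, which is what makes the claim legitimate in a Banach space without coordinates — but the conclusion and the weights are the same.)

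The genuine gap is exactly in the step you flagged: the uniform-in-$t$ control of the stochastic remainder $r_n(t)=\frac{1}{nh_n}\sum_i\big(w_i(t)-1\big)K\big(\tfrac{i-nt}{nh_n}\big)\eps_i$. Your grid-plus-union-bound route does not deliver the stated rate. With only $\sup_i\ex\|\eps_i\|^p\le C$ and no independence or mixing, the pointwise bound is $(\ex\|r_n(t)\|^p)^{1/p}=\Oc\big(\tfrac{1}{nh_n}\big)$, and an $L^p$ union bound over an $\Oc(n)$-point grid multiplies this by $n^{1/p}$, giving $\Oc_\pr\big(\tfrac{n^{1/p}}{nh_n}\big)$ — worse than the claimed $\Oc_\pr\big(\tfrac{1}{nh_n^{1+1/p}}\big)$ by a factor $(nh_n)^{1/p}$, which diverges in any useful bandwidth regime; your displayed arithmetic ``$\tfrac{1}{(nh_n)^2}\cdot nh_n\cdot(nh_n)^{1-1/p}\cdot n^{1/p}$ collapses to $\tfrac{1}{nh_n^{1+1/p}}$'' does not in fact collapse to that quantity. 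No chaining is needed, and chaining cannot be rescued here because there is no concentration beyond $p$ moments. The paper instead bounds $\ex\big[\sup_t\|r_n(t)\|^p\big]$ directly: apply the triangle inequality and then Hölder in $i$ with exponents $(p,\tfrac{p}{p-1})$, so that the random factor becomes $\sum_{i=1}^n\|\eps_i\|^p$ — which is $t$-free and has expectation $\Oc(n)$ — while the $t$-dependent factor $\sup_t\big(\sum_i|w_i(t)-1|^{p/(p-1)}K^{p/(p-1)}\big)^{p-1}$ is deterministic and of order $\Oc\big(\tfrac{1}{nh_n}\big)$ because the window has $\Oc(nh_n)$ terms and $|w_i(t)-1|=\Oc\big(\tfrac{1}{nh_n}\big)$ uniformly. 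This gives $\ex[\sup_t\|r_n(t)\|^p]=\Oc\big(\tfrac{1}{n^ph_n^{p+1}}\big)$ and hence $\Oc_\pr\big(\tfrac{1}{nh_n^{1+1/p}}\big)$ by Markov; for $p=1$ one replaces Hölder by pulling out $\max_i|w_i(t)-1|K=\Oc\big(\tfrac{1}{nh_n}\big)$. You would need to replace your grid argument by this (or an equivalent) device for both the level and the derivative remainders to obtain the theorem as stated.
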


\section{Empirical Results} \label{sec:empirical}

In the following, we study the finite sample properties of the local linear estimator and its Jackknife version and compare it to the Nadaraya-Watson estimator, defined as 
\[ \hat{\mu}_{NW}(t) = \frac{\sum_{i=1}^n X_i K\big(\tfrac{i-nt}{nh_n}\big)}{\sum_{i=1}^n K\big(\tfrac{i-nt}{nh_n}\big)}, \]
for $t\in[0, 1]$ and $K$ and $h_n$ as before \citep{fan1996}. In the remainder of this section, we  consider the equidistant decomposition $\{\tfrac{0}{n}, \tfrac{1}{n}, \dots, \tfrac{n}{n}\}$ of the interval $[0, 1]$. As the Nadaraya-Watson estimator does not yield an estimate of the derivative $D\mu(t)$, we estimate it by calculating 
\[\widehat{D\mu}_{NW}\big(\tfrac{i}{n}\big) = \frac{\hat{\mu}_{NW}\big(\tfrac{i+1}{n}\big) - \hat{\mu}_{NW}\big(\tfrac{i-1}{n}\big)}{\tfrac{2}{n}}, \]
for $i\in \{1, \dots, n-1\}$, $\widehat{D\mu}_{NW}(0) = n \big(\hat{\mu}_{NW}(\tfrac{1}{n}) - \hat{\mu}_{NW}(0)\big)$ and $\widehat{D\mu}_{NW}(1) = n \big(\hat{\mu}_{NW}(1) - \hat{\mu}_{NW}(\tfrac{n-1}{n})\big)$.

In Section \ref{sec:sim_data}, we generate synthetic data to compare the estimators based on the real quantities $\mu$ and $D\mu$. In Section \ref{sec:real_data}, we showcase the estimators in real applications, where smoothing the data is only the first step of a statistical analysis. The code for the subsequent experiments can be found in the GitHub repository: \url{https://github.com/FlorianHeinrichs/banach_space_llr/}

\subsection{Monte Carlo Simulation Study} \label{sec:sim_data}

Given the model $X_i=\mu\big(\tfrac{i}{n}\big) + \eps_i $, as introduced in \eqref{eq:model}, the estimators are compared for different choices of $\mu$ and $\eps$. For the mean operator $\mu$, the choices
\begin{figure}
	\includegraphics[width=\textwidth]{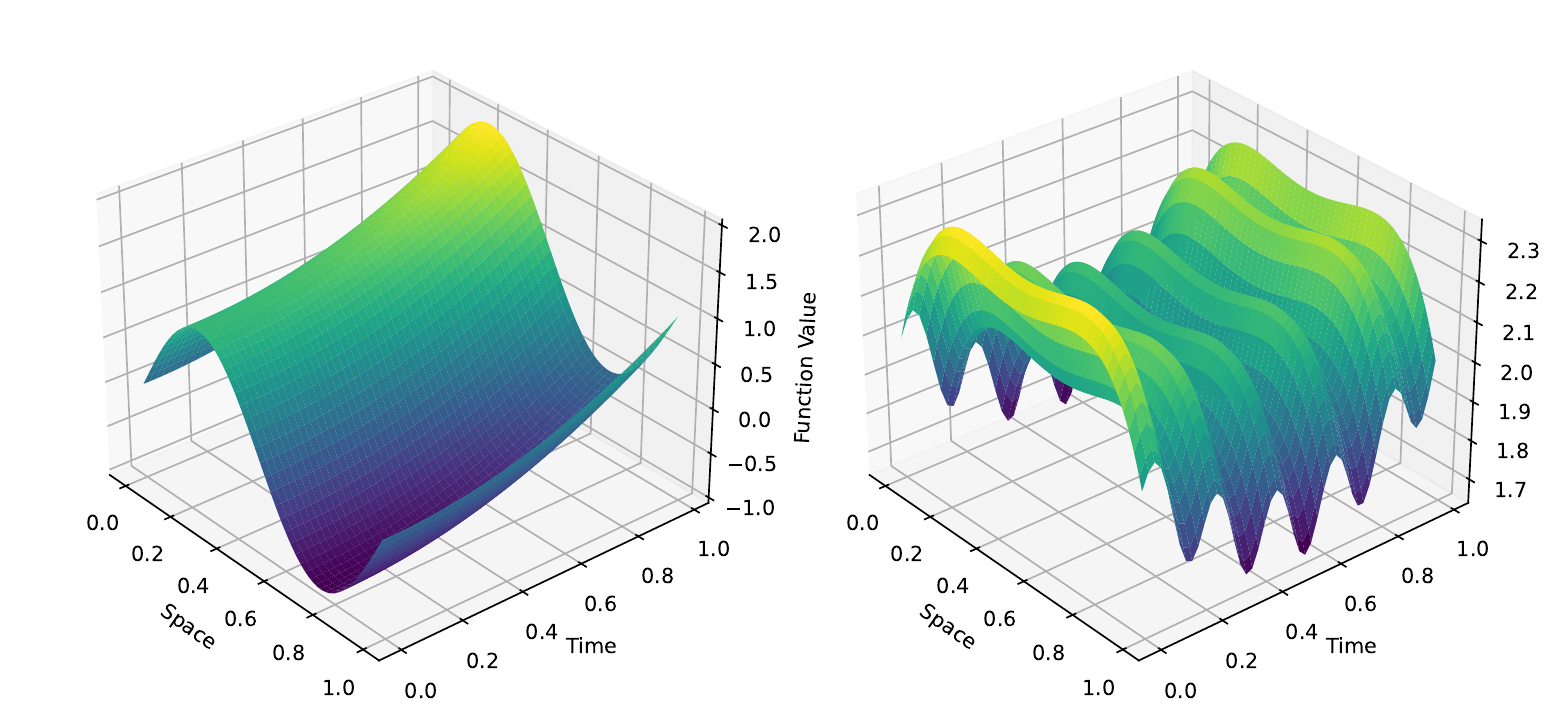}
	\caption{Different choices of the mean operators $\mu$. Left: $\mu_1$. Right: $\mu_2$.}
	\label{fig:mu}
\end{figure}
\begin{align*}
	\mu_1(t) & = \sin(2 \pi \cdot) + t^2 \\
	\mu_2(t) & = \phi(\cdot) + \big(t - \tfrac{1}{2}\big)^2 + \tfrac{1}{10}\sin(10 \pi t) + \tfrac{3}{4},
\end{align*}
where $\phi(x) = -8x^4 + 16x^3 - 11x^2 + 3x+1$, are studied, as displayed in Figure \ref{fig:mu}. The different mean operators were selected to evolve monotonously and non-monotonously in time, and non-monotonously in space. Note that $\mu_i \in C([0, 1]^2)$, for $i=1, 2$, and in particular $\mu_i(t) \in C([0, 1])$, for each $t\in[0, 1]$. The error processes were adapted from \cite{bucher2023}. More specifically, let $(\eta_i^{(1)})_{i\in\N}$ denote an i.i.d. sequence of Brownian motions and $(\eta_i^{(2)})_{i\in\N}$ an i.i.d. sequence of Brownian bridges. We generated the following error processes
%
\[
\begin{array}{llll}
	\text{(BM)} & \eps_i = \eta_i^{(1)}, 
	& \text{(BB)} & \eps_i = \eta_i^{(2)}, \\
	\text{(FAR-BM)} & \eps_i = \rho(\eps_{i-1}) + \eta_i^{(1)}, 
	& \text{(FAR-BB)} & \eps_i = \rho(\eps_{i-1}) + \eta_i^{(2)}, \\
	\text{(tvBM)} & \eps_i = \sigma(\tfrac{i}{n})\eta_i^{(1)}, & \\
	\text{(tvFAR1)} & \eps_i = \rho(\eps_{i-1}) + \sigma(\tfrac{i}{n})\eta_i^{(1)},
	& \text{(tvFAR2)} & \eps_i = \sigma(\tfrac{i}{n}) \rho(\eps_{i-1}) + \eta_i^{(1)},\\
\end{array}
\]

where $\sigma(x) = x + \tfrac{1}{2}$ and $\rho$ denotes the integral operator $\rho(f) = \int_0^1 K(\cdot, x) f(x) \diff x$, for $K(y, x) = 0.3 \sqrt{6} \min\{x, y\}$. The error processes were selected to cover  independent and dependent, stationary and non-stationary time series. Trajectories of error processes (FAR-BM), (tvBM) and (tvFAR1) are displayed in Figure \ref{fig:epsilon}.

\begin{figure}
	\includegraphics[width=\textwidth]{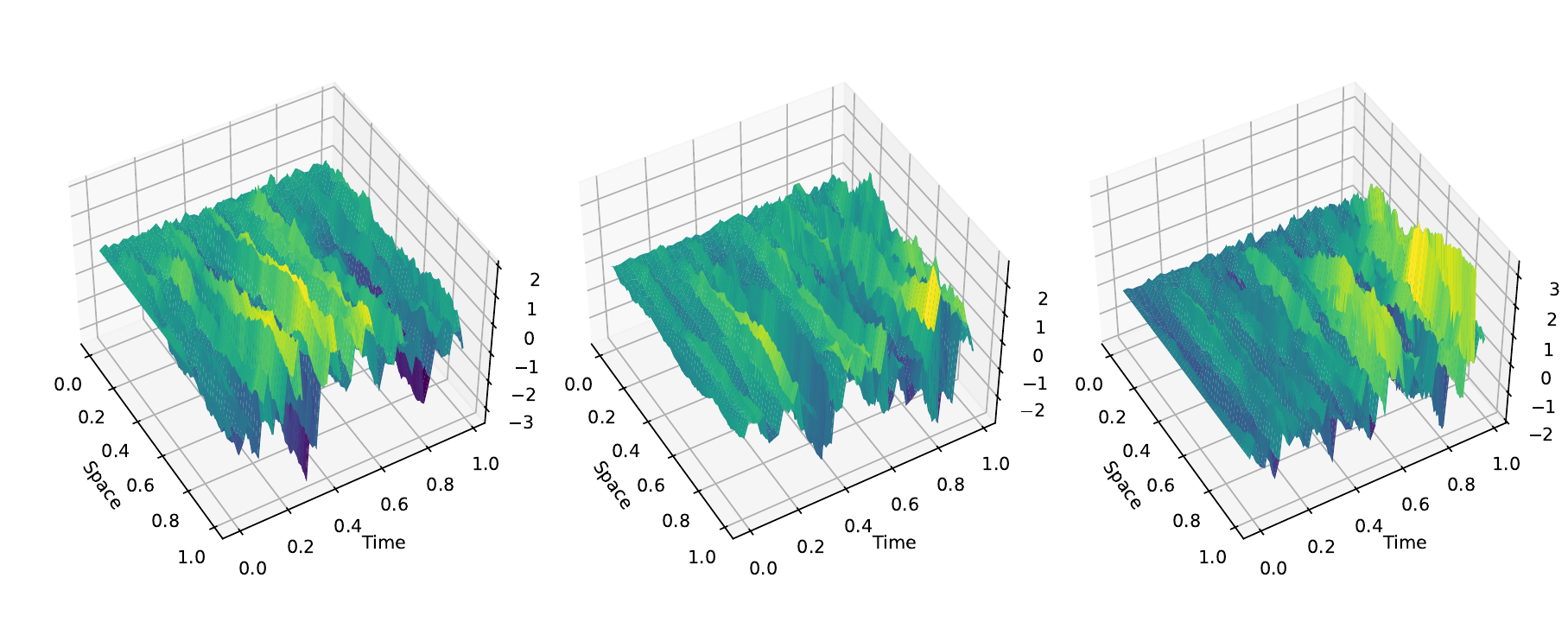}
	\caption{Different choices of the error process $\eps$. Left: (FAR-BM). Center: (tvBM) Right: (tvFAR1).}
	\label{fig:epsilon}
\end{figure}

For each combination of mean operator and error process, we generated $1000$ trajectories of length $n= 50, 100, 200$ and $500$. Further, we chose the spatial resolution $m=100$, so that we used the grid points $\tfrac{i}{m-1}$, for $i=0 ,\dots, m-1$. 
For all estimators, we used the quartic kernel $K(x) = \tfrac{15}{16}(1-x^2)^2$ and selected the bandwidth $h_n$ between $\tfrac{1}{n}$ and $\tfrac{1}{\sqrt{n}}$ via $k$-fold cross-validation with $k=5$ folds, minimizing the mean-squared error (MSE) between predictions and observations in the validation data. 

For the comparison, we evaluated the MSE of the estimators with respect to the true mean operator $\mu$ and its derivative $D\mu$, defined as
\[ MSE(\hat{\mu}) = \frac{1}{n} \sum_{i=1}^{n} \|\hat{\mu}(\tfrac{i}{n}) - \mu(\tfrac{i}{n})\|_2^2 = \frac{1}{n} \sum_{i=1}^{n} \frac{1}{m} \sum_{j=0}^{m-1} \Big(\hat{\mu}(\tfrac{i}{n}, \tfrac{j}{m-1}) - \mu(\tfrac{i}{n}, \tfrac{j}{m-1})\Big)^2. \]
The experiment's results are shown in Table \ref{tab:results1}. Additionally, we calculated the mean absolute error (MAE), defined with respect to the $L^1$-norm as 
\[ MAE(\hat{\mu}) = \frac{1}{n} \sum_{i=1}^{n} \|\hat{\mu}(\tfrac{i}{n}) - \mu(\tfrac{i}{n})\|_1 = \frac{1}{n} \sum_{i=1}^{n} \frac{1}{m} \sum_{j=0}^{m-1} \big|\hat{\mu}(\tfrac{i}{n}, \tfrac{j}{m-1}) - \mu(\tfrac{i}{n}, \tfrac{j}{m-1})\big| \]
and the processing time of each estimator. The respective results are listed in Tables \ref{tab:times} and \ref{tab:results2} of Appendix \ref{app:appendix_empirical_results}.

Generally, for a given error process $\eps$, the MSE is very similar for both mean operators $\mu_1$ and $\mu_2$. Also, the MSEs for the derivative are of a similar order. 
Overall, the Jackknife estimator has higher MSEs compared to the local linear and the Nadaraya Watson estimators. The MSEs are substantially lower for the estimates of $\mu$ compared to the estimates of $D\mu$, which may have two possible causes. First, estimating the derivative $D\mu$ is generally known to be more difficult and the asymptotic rates decrease slower. Second, the bandwidth $h_n$ was selected by minimizing the MSE of $\hat{\mu}$, rather than that of $\widehat{D\mu}$. This also explains why the MSEs increase for $D\mu$, as $n$ grows. Generally, the estimation of derivatives requires larger bandwidths.
For the estimation of $\mu$, the local linear estimator $\hat{\mu}_{h_n}$ has generally the lowest MSE, while for the estimation of $D\mu$, the Nadaraya-Watson estimator seems favorable. In the latter case, however, the situation is not as clear, and $\widehat{D\mu}_{h_n}$ has a lower MSE than $\widehat{D\mu}_{NW}$ for some error processes. Finally, the MAE reflects the results of the MSE.

The processing time of all estimators seems to grow approximately linearly with $n$. As expected, there are no substantial differences between the variants of $\mu$ and $\eps$. The processing time of the Jackknife estimator is about twice as long as that of the local linear estimator, which is clear from the definition of the former. The Nadaraya-Watson estimator takes roughly half the time of the local linear estimator. Again, this is expected, as the Nadaraya-Watson estimator can be expressed as $\tfrac{R_0(t)}{S_0(t)}$, with the notation from \eqref{eq:s_r}, while the local linear estimator equals $\tfrac{S_2(t) R_0(t) - S_1(t)R_1(t)}{S_2(t)S_0(t)-S_1^2(t)}$. Since $S_1(t)$ converges to $0$, uniformly in $t$, this also explains why both estimators take similar values.

\setlength{\tabcolsep}{3pt}

\begin{table}\small
\caption{MSE of the compared estimators for different choices of $\mu$ and $\eps$.}
\label{tab:results1}
\begin{tabular}{r|rrrrrr|rrrrrr}
	& \multicolumn{6}{c|}{$\mu_1$} & \multicolumn{6}{c}{$\mu_2$} \\
	$n$ & $\tilde{\mu}_n$ & $\hat{\mu}_{h_n}$ & $\hat{\mu}_{NW}$ & $\widetilde{D\mu}_n$ & $\widehat{D\mu}_{h_n}$ & $\widehat{D\mu}_{NW}$ & $\tilde{\mu}_n$ & $\hat{\mu}_{h_n}$ & $\hat{\mu}_{NW}$ & $\widetilde{D\mu}_n$ & $\widehat{D\mu}_{h_n}$ & $\widehat{D\mu}_{NW}$ \\
	\midrule
	\addlinespace[.2cm]
	\multicolumn{13}{l}{\quad\textit{Panel A: (BM)}} \\
	50 & 0.23 & \textbf{0.07} &\textbf{ 0.07} & 1489.95 & 21.96 & \textbf{14.25} & 0.15 & \textbf{0.07} & \textbf{0.07} & 407.80 & 20.75 & \textbf{17.71} \\
	100 & 0.08 & \textbf{0.04} & 0.05 & 278.17 & 26.51 & \textbf{20.08} & 0.08 &\textbf{ 0.04} & 0.05 & 267.60 & 25.56 & \textbf{19.84} \\
	200 & 0.06 & \textbf{0.03} & \textbf{0.03} & 520.68 & 33.60 & \textbf{23.69} & 0.06 & \textbf{0.03} & \textbf{0.03} & 518.53 & 34.46 & \textbf{26.90} \\
	500 & 0.04 & \textbf{0.02} & \textbf{0.02} & 729.17 & 49.02 & \textbf{35.12} & 0.04 & \textbf{0.02} & \textbf{0.02} & 725.79 & 50.05 & \textbf{35.65} \\
	\addlinespace[.2cm]
	\multicolumn{13}{l}{\quad\textit{Panel B: (BB)}} \\
	50 & 0.06 & \textbf{0.02} & \textbf{0.02} & 242.42 & 5.79 & \textbf{3.79} & 0.05 & \textbf{0.02} & \textbf{0.02} & 128.68 & 8.66 & \textbf{7.35} \\
	100 & 0.03 & \textbf{0.01} & \textbf{0.01} & 88.83 & 6.96 & \textbf{4.99} & 0.03 & \textbf{0.02} & \textbf{0.02} & 88.98 & 8.87 & \textbf{7.12} \\
	200 & 0.02 & \textbf{0.01} & \textbf{0.01} & 169.14 & 10.79 & \textbf{7.35} & 0.02 & \textbf{0.01} & \textbf{0.01} & 168.27 & 10.60 & \textbf{7.77} \\
	500 & \textbf{0.01} & \textbf{0.01} & \textbf{0.01} & 239.72 & 15.90 & \textbf{11.27} & \textbf{0.01} & \textbf{0.01} & \textbf{0.01} & 236.77 & 15.99 & \textbf{11.27} \\
	\addlinespace[.2cm]
	\multicolumn{13}{l}{\quad\textit{Panel C: (FAR-BM)}} \\
	50 & 0.23 & \textbf{0.12} & 0.13 & 491.29 & 57.50 & \textbf{49.49} & 0.23 & \textbf{0.12} & 0.13 & 505.46 & 44.18 & \textbf{42.76} \\
	100 & 0.15 & \textbf{0.09} & 0.11 & 571.23 & \textbf{99.19} & 114.85 & 0.15 & \textbf{0.10} & 0.11 & 531.52 & 134.38 & \textbf{128.65} \\
	200 & 0.12 & \textbf{0.08} & 0.09 & 1282.39 & \textbf{311.93} & 316.70 & 0.12 & \textbf{0.09} & 0.10 & 1195.59 & 386.11 & \textbf{371.58} \\
	500 & 0.08 & \textbf{0.07} & \textbf{0.07} & 2853.84 & 1755.15 & \textbf{1216.03} & 0.09 & \textbf{0.07} & 0.08 & 3463.97 & 1883.35 & \textbf{1460.87} \\
	\addlinespace[.2cm]
	\multicolumn{13}{l}{\quad\textit{Panel D: (FAR-BB)}} \\
	50 & 0.06 & \textbf{0.03} & \textbf{0.03} & 146.52 & 8.60 & \textbf{7.31} & 0.06 & \textbf{0.03} & \textbf{0.03} & 146.06 & \textbf{9.82} & 11.18 \\
	100 & 0.04 & \textbf{0.02} & \textbf{0.02} & 117.84 & 18.15 & \textbf{10.70} & 0.04 & \textbf{0.02} & 0.03 & 116.00 & 17.58 & \textbf{17.51} \\
	200 & 0.03 & \textbf{0.02} & \textbf{0.02} & 244.74 & 24.44 & \textbf{23.75} & 0.03 & \textbf{0.02} & \textbf{0.02} & 235.29 & 30.50 & \textbf{24.52} \\
	500 & 0.02 & \textbf{0.01} & \textbf{0.01} & 384.68 & 42.72 & \textbf{33.77} & 0.02 & \textbf{0.01} & \textbf{0.01} & 395.22 & 46.48 & \textbf{38.91} \\
	\addlinespace[.2cm]
	\multicolumn{13}{l}{\quad\textit{Panel E: (tvBM)}} \\
	50 & 0.16 & \textbf{0.07} & \textbf{0.07} & 440.63 & 34.95 & \textbf{18.31} & 0.17 & \textbf{0.08} & \textbf{0.08} & 470.08 & 30.29 & \textbf{19.57} \\
	100 & 0.09 & \textbf{0.05} & \textbf{0.05} & 338.28 & 47.99 & \textbf{24.56} & 0.09 & \textbf{0.05} & \textbf{0.05} & 298.05 & 33.68 & \textbf{24.07} \\
	200 & 0.07 & \textbf{0.03} & \textbf{0.03} & 617.61 & 41.53 & \textbf{28.67} & 0.07 & \textbf{0.03} & \textbf{0.03} & 581.39 & 44.70 & \textbf{31.30} \\
	500 & 0.04 & \textbf{0.02} & \textbf{0.02} & 814.03 & 58.66 & \textbf{40.34} & 0.04 & \textbf{0.02} & \textbf{0.02} & 800.36 & 59.56 & \textbf{42.43} \\
	\addlinespace[.2cm]
	\multicolumn{13}{l}{\quad\textit{Panel F: (tvFAR1)}} \\
	50 & 0.39 & \textbf{0.14} & 0.16 & 971.13 & 99.12 & \textbf{72.93} & 0.25 & \textbf{0.13} & 0.15 & 539.90 & \textbf{49.35} & 56.09 \\
	100 & 0.16 & \textbf{0.11} & 0.14 & 556.02 & \textbf{170.90} & 186.98 & 0.16 & \textbf{0.13} & \textbf{0.13} & 631.44 & 209.00 & \textbf{151.46} \\
	200 & 0.14 & \textbf{0.09} & 0.11 & 1589.57 & \textbf{326.21} & 457.70 & 0.14 & \textbf{0.09} & 0.10 & 1840.00 & \textbf{322.72} & 402.68 \\
	500 & 0.10 & \textbf{0.08} & 0.09 & 3959.31 & \textbf{1597.93} & 1810.27 & 0.10 & \textbf{0.08} & 0.09 & 3858.18 & 2051.42 & \textbf{1528.49} \\
	\addlinespace[.2cm]
	\multicolumn{13}{l}{\quad\textit{Panel G: (tvFAR2)}} \\
	50 & 0.24 & \textbf{0.13} & 0.14 & 538.32 & 59.93 & \textbf{49.89} & 0.23 & \textbf{0.11} & 0.14 & 486.28 & \textbf{33.25} & 48.05 \\
	100 & 0.16 & \textbf{0.09} & 0.12 & 772.66 & \textbf{97.80} & 146.61 & 0.15 & \textbf{0.10} & 0.12 & 595.05 & 137.26 & \textbf{122.53} \\
	200 & 0.13 & \textbf{0.09} & 0.10 & 1414.81 & 447.01 & \textbf{337.59} & 0.13 & \textbf{0.08} & 0.10 & 1501.27 & \textbf{309.48} & 370.49 \\
	500 & 0.09 & \textbf{0.07} & 0.08 & 2858.86 & \textbf{1306.55} & 1503.84 & 0.09 & \textbf{0.06} & 0.09 & 3778.31 & \textbf{714.32} & 1715.89 \\
\end{tabular}
\end{table}

\subsection{Case Study} \label{sec:real_data}

In the following, we examine the estimators by analyzing real data. On the one hand we consider eye movements, as recorded via electroencephalography (EEG), and on the other hand videos.

\subsubsection{EEG-Data Analysis}

We used the ``Consumer-Grade EEG and Eye-Tracking Dataset'', which contains 116 EEG recordings with simultaneously measured eye movements \citep{afonso2025}. The corresponding challenge is the reconstruction of eye movements from the EEG. For the following experiments, we considered the ``level-2-smooth'' recordings, where the participants' eyes followed a smoothly moving target on a screen. Only recordings without technical errors have been considered.
The EEG part of the dataset consists of 4 EEG signals recorded at different positions on the scalp with a sampling rate of 256 Hz. We first transformed the four dimensional time series $(Y_i)_{i=1}^n$ to a (discretized) multivariate functional time series by defining 
\[ X_i(\tfrac{j}{m-1}) = Y_{5i + j} \]
for $j=0, \dots, m-1, m=50$ and $i=1, \dots, \lfloor \tfrac{n}{5}\rfloor - (m-1)=:\nu$. With this definition, each observation $X_i$ can be considered as an element of $\big(L^2([0, 1])\big)^4$, for $i=1, \dots, \nu$. As target variable, we considered the $x$- and $y$-coordinate of the moving target on the screen. 

In addition to the three estimators $\tilde{\mu}_n, \hat{\mu}_{h_n}$ and $\hat{\mu}_{NW}$, we use the originally proposed frequency filter to smooth the data.
To compare the different methods, we first smoothed the time series $(X_i)_{i=1}^\nu$. For the kernel estimators, we used the quartic kernel, as before, and selected $h_n = \tfrac{50}{\nu}$. Subsequently, we computed the $L^2$-norm of the smoothed time series per coordinate, yielding a four-dimensional time series. Then, we determined the offset between the EEG and the target on screen by finding the lag, that maximizes the cross-correlation function between these time series. Finally, we fitted a linear model, that takes the time-corrected, smooth EEG signal at some time point $i$ as input and predicts the $x$- and $y$-coordinate of the target at the same time point. To evaluate the quality of the reconstructed eye movements, we calculated the correlation $\rho_x$ and $\rho_y$ between true and predicted $x$- and $y$-coordinate, respectively. The results are reported in Table \ref{tab:eeg} and the reconstructed signal for recording \texttt{P085\_01} is visualized in Figure \ref{fig:eeg}.

In line with previous results, the estimates of the Nadaraya-Watson and the local linear estimator are very similar. The Jackknife-based predictions yield slightly lower correlations, and the predictions based on the frequency filter are significantly worse. The $x$-coordinate seems easier to reconstruct, which is likely due to the electrodes' positions. The electrodes are spread horizontally across the forehead and can therefore measure lateral movements better. The standard deviation is generally high, which suggests that the reconstruction is easier for some participants than for others.

\begin{table}
	\caption{Average correlation between true and predicted coordinates of the target, and average processing time (in ms), across all recordings (standard deviation in parentheses).}	
	\label{tab:eeg}
	\begin{tabular}{r|rrrr}
		& NWE & LLE & Jackknife & Filter \\
		\midrule
		$\rho_x$ & \textbf{0.47} ($\pm$ 0.16) & \textbf{0.47} ($\pm$ 0.16) & 0.41 ($\pm$ 0.15) & 0.12 ($\pm$ 0.05) \\
		$\rho_y$ & \textbf{0.37} ($\pm$ 0.16) & \textbf{0.37} ($\pm$ 0.16) & 0.29 ($\pm$ 0.12) & 0.13 ($\pm$ 0.06) \\
		Time (in ms)& \textbf{90.47} ($\pm$ 8.27) & 231.1 ($\pm$ 16.22) & 480.07 ($\pm$ 31.15) & 77.94 ($\pm$ 6.65) \\
	\end{tabular}
\end{table}

\begin{figure}
	\centering
	\includegraphics[width=0.9\textwidth]{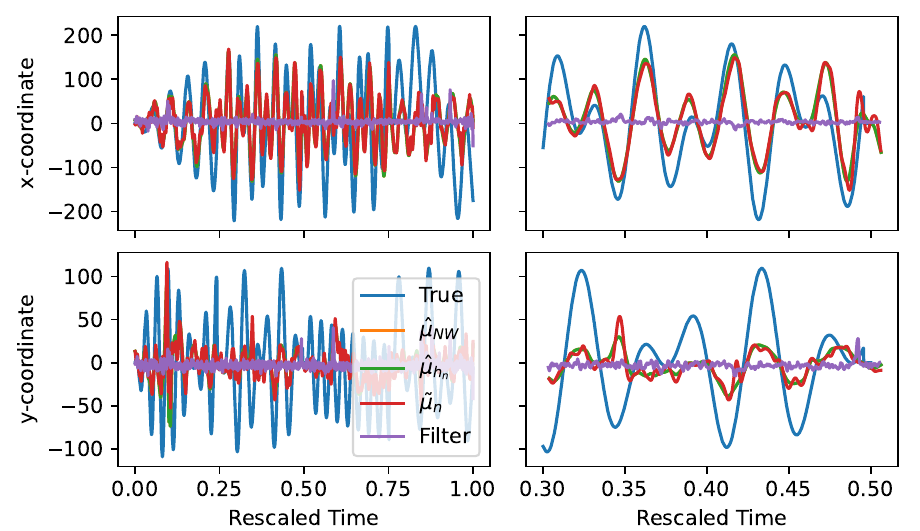}
	\caption{Reconstruction of target position based on EEG data, where the data was smoothed with different methods.}
	\label{fig:eeg}
\end{figure}

\subsubsection{Video Analysis}

Typical tasks in the analysis of (functional) time series, are the detection of outliers and change points. For this purpose, we used two videos of the CAVIAR dataset \citep{fisher2005}. Videos can be naturally regarded as functional time series, where each observation $X_t$ is a function of an $x$ and $y$ coordinate.

We first considered the video ``\texttt{WalkByShop1front}''. The task is to detect pedestrians passing through the video as ``outliers'' compared to an otherwise constant background. We used the Jackknife, local linear and Nadaraya-Watson estimators to smooth the video, with a bandwidth of $50$ frames, which corresponds to $2$\,s. Subsequently, we calculated the residuals as difference between the original video and its smoothed version, and computed the $L^2$-norm of the residuals, for each time point $t$. The resulting univariate time series is displayed in Figure \ref{fig:shop}. Time points, where pedestrians pass through the video can be easily identified by the peaks. 

\begin{figure}
	\centering
	\includegraphics[width=0.9\textwidth]{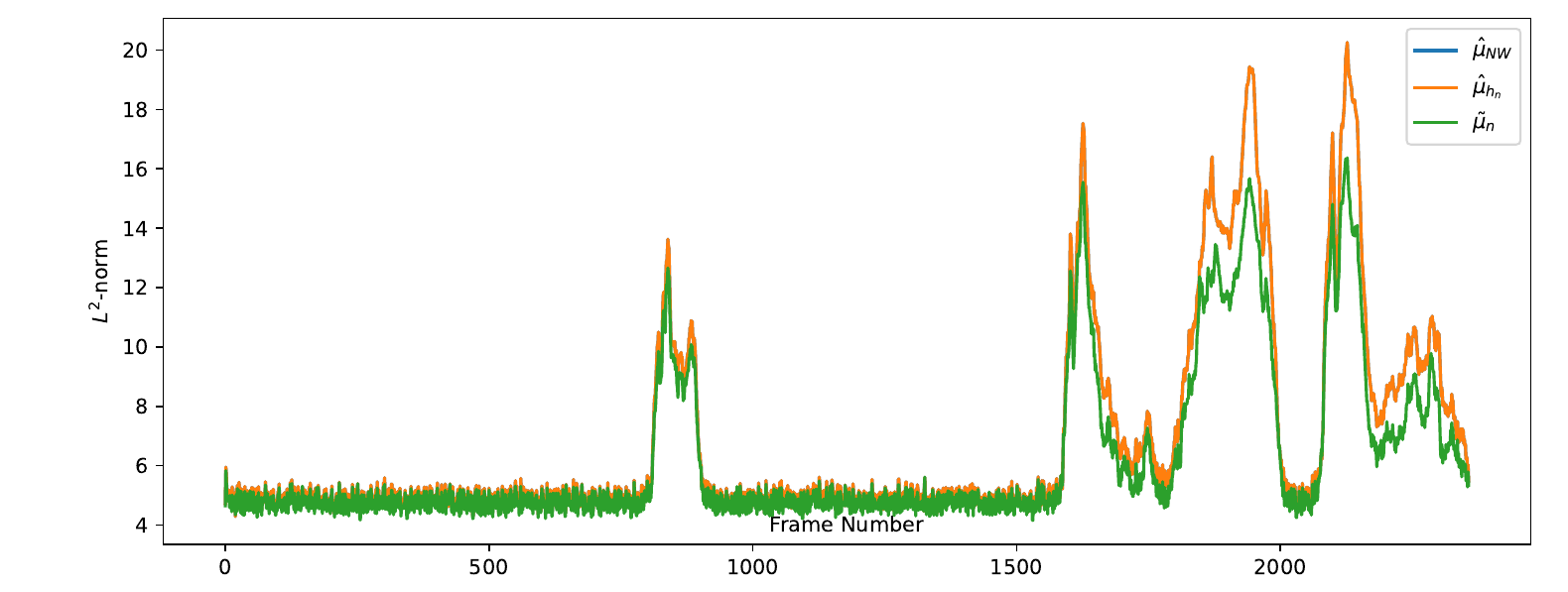}
	\caption{Sup norm of the residuals for various estimators.}
	\label{fig:shop}
\end{figure}

Second, we considered the video ``\texttt{LeftBag\_AtChair}'' between frames $254$ and $744$, which corresponds to $19.6$\,s. Here the task is to detect the time at which a bag is left on the floor by a pedestrian. We smoothed the video with the different estimators and a bandwidth of $100$ frames, and calculated residuals as before. Next, we calculated the sup norm of each residual, and finally the CUSUM process of the resulting univariate time series. The CUSUM processes are displayed in Figure \ref{fig:bag}. The CUSUM process reaches its maximum in frame $283$ (frame $537 = 283 + 254$ in the full video), for all estimators, which corresponds to the time point, in which the bag is dropped by the pedestrian.

The smoothed videos and the residuals are available online: \url{https://github.com/FlorianHeinrichs/banach_space_llr/tree/main/videos}

\begin{figure}
	\centering
	\includegraphics[width=0.9\textwidth]{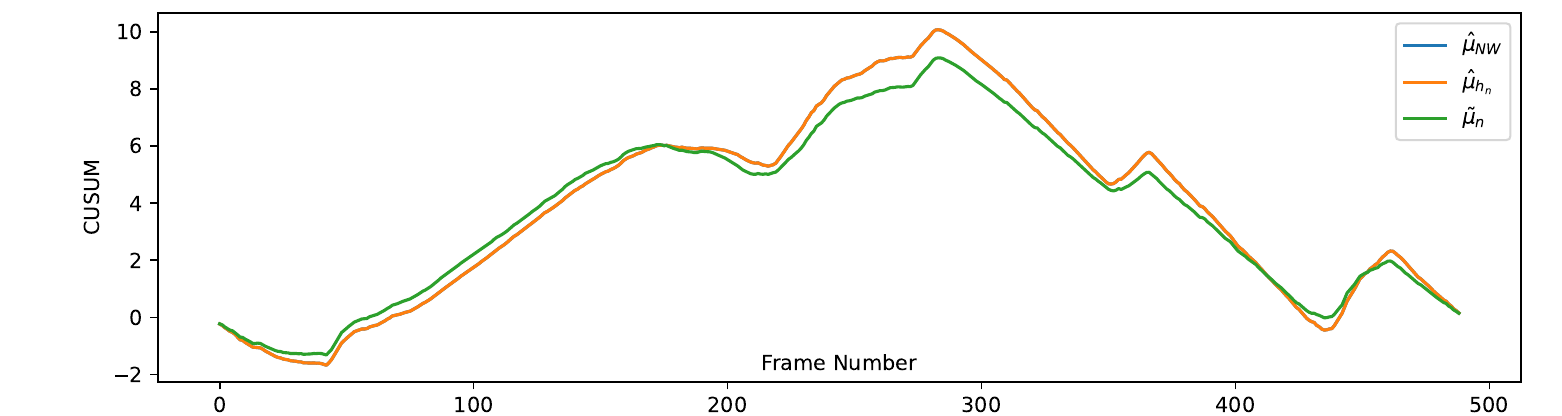}
	\caption{CUSUM process of the residuals' sup norm for various estimators.}
	\label{fig:bag}
\end{figure}

\section{Proof of Theorem \ref{thm:lle}} \label{sec:proof}

Before we prove the theorem, we state a sufficient condition for minima of convex functions.

\begin{proposition} \label{prop:convex_min}
	Let $V$ be a Banach space and $f:V\to\R$ a convex function. If $f$ is Gateaux differentiable in $x_0$ and $\diff f(x_0, \psi) = 0$ for any $\psi\in V$, then $x_0$ is a global minimum of $f$.
\end{proposition}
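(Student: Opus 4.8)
The plan is to prove this by a standard convexity argument, using the Gateaux differential as a ``one-variable derivative'' along each line through $x_0$. The key observation is that for a convex function $f:V\to\R$, and any fixed $\psi\in V$, the scalar function $g_\psi:\R\to\R$ defined by $g_\psi(\tau) = f(x_0 + \tau\psi)$ is convex in $\tau$. This reduces the claim to a one-dimensional fact about convex functions of a real variable.

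First I would verify the convexity of $g_\psi$: for $\tau_1, \tau_2 \in \R$ and $\lambda \in [0,1]$, we have $x_0 + (\lambda\tau_1 + (1-\lambda)\tau_2)\psi = \lambda(x_0+\tau_1\psi) + (1-\lambda)(x_0+\tau_2\psi)$, so convexity of $f$ gives $g_\psi(\lambda\tau_1+(1-\lambda)\tau_2) \le \lambda g_\psi(\tau_1) + (1-\lambda)g_\psi(\tau_2)$. Next, I would observe that the hypothesis $\diff f(x_0;\psi) = 0$ says precisely that $g_\psi$ is differentiable at $0$ with $g_\psi'(0) = \lim_{\tau\to 0}\frac{g_\psi(\tau)-g_\psi(0)}{\tau} = 0$. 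A convex function on $\R$ that is differentiable at a point with vanishing derivative there attains its global minimum at that point: indeed, for any $\tau > 0$, the difference quotient $\frac{g_\psi(\tau)-g_\psi(0)}{\tau}$ is non-decreasing in $\tau$ (a standard consequence of convexity via the three-chords inequality), hence $\ge g_\psi'(0) = 0$, so $g_\psi(\tau) \ge g_\psi(0)$; the case $\tau < 0$ is symmetric, using that the difference quotient toward $0$ from the left is $\le g_\psi'(0) = 0$. Therefore $g_\psi(\tau) \ge g_\psi(0) = f(x_0)$ for all $\tau\in\R$.

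Finally, I would conclude by noting that an arbitrary point $x\in V$ can be written as $x = x_0 + \psi$ with $\psi = x - x_0$, and taking $\tau = 1$ in the above gives $f(x) = g_\psi(1) \ge g_\psi(0) = f(x_0)$. Since $x$ was arbitrary, $x_0$ is a global minimum of $f$.

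I do not anticipate a serious obstacle here; the only point requiring a little care is the monotonicity of difference quotients for a convex function of one real variable, which should be stated cleanly (or cited) rather than re-derived in detail. One should also be mindful that the Gateaux differential as defined in the paper need not be linear in $\psi$, but this is irrelevant for the argument, since only the single direction $\psi = x - x_0$ is used and only the existence of the scalar limit $\diff f(x_0;\psi)$ matters.
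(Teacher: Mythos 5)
Your proposal is correct and is essentially the paper's own argument: both restrict $f$ to the segment through $x_0$ in the single direction $\psi = x - x_0$, use convexity to compare the difference quotient at $x_0$ with $f(x) - f(x_0)$, and invoke the vanishing Gateaux differential to conclude $f(x) \ge f(x_0)$. The paper simply rearranges the convexity inequality and lets $t \to 0$ directly, whereas you phrase the same step via monotonicity of difference quotients of the scalar convex function $g_\psi$; the difference is cosmetic.
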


\begin{proof}
	Let $t\in (0, 1]$ and $x\in V$, then, by convexity of $f$,
	\[ f\big(x_0 + t(x-x_0)\big) \le f(x_0) + t\big(f(x)-f(x_0)\big). \]
	
	Rearranging terms yields
	\[ \frac{f\big(x_0 + t(x-x_0)\big) - f(x_0)}{t} \le f(x)-f(x_0). \]
	
	For $t\to 0$, the left-hand side converges to $\diff f (x_0, x-x_0)$, which is 0 by assumption. Thus, $f(x)\ge f(x_0)$ .
\end{proof}

\begin{proposition} \label{prop:taylor}
	Let $\mu$ satisfy  part \ref{assump:mu} of Assumption \ref{assump}. Then, the Taylor expansion 
	\[ \mu\big(\tfrac{i}{n}\big) = \mu(t) + D \mu(t) \big(\tfrac{i}{n}-t\big) + \tfrac{1}{2} D^2 \mu(t) \big(\tfrac{i}{n}-t\big)^2 + \Oc\Big(\big|\tfrac{i}{n}-t\big|^3\Big) \]
	holds uniformly in $t$.
\end{proposition}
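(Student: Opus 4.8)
The plan is to reduce the Banach-space statement to the classical scalar Taylor formula by testing against continuous linear functionals. Fix $t$ and write $h = \tfrac{i}{n}-t$. For an arbitrary $\ell\in \C^*$, the composition $\ell\circ\mu:[0,1]\to\R$ is twice differentiable, and by the chain rule for a Fréchet-differentiable map composed with a bounded linear functional one has $(\ell\circ\mu)'(s) = \ell\big(D\mu(s)\big)$ and $(\ell\circ\mu)''(s) = \ell\big(D^2\mu(s)\big)$, after the usual identification of $\mathcal{L}(\R,\C)$ (resp.\ $\mathcal{L}(\R,\mathcal{L}(\R,\C))$) with $\C$. Since $D^2\mu$ is Lipschitz by Assumption \ref{assump}, part \ref{assump:mu}, hence continuous, $(\ell\circ\mu)''$ is continuous, so the scalar Taylor formula with integral remainder applies:
\[ \ell\big(\mu(t+h)\big) = \ell\big(\mu(t)\big) + \ell\big(D\mu(t)\big)h + \tfrac12\ell\big(D^2\mu(t)\big)h^2 + \int_t^{t+h}(t+h-s)\,\ell\big(D^2\mu(s)-D^2\mu(t)\big)\diff s. \]

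Next I would bound the remainder uniformly. Let $L$ be the Lipschitz constant of $D^2\mu$. For $s$ between $t$ and $t+h$ we have $\|D^2\mu(s)-D^2\mu(t)\|\le L|s-t|\le L|h|$, so the integrand is bounded in absolute value by $\|\ell\|\,|t+h-s|\,L|h|$, and $\int_t^{t+h}|t+h-s|\diff s = \tfrac12 h^2$ gives
\[ \Big| \ell\big(\mu(t+h)\big) - \ell\big(\mu(t)\big) - \ell\big(D\mu(t)\big)h - \tfrac12\ell\big(D^2\mu(t)\big)h^2 \Big| \le \tfrac{L}{2}\|\ell\|\,|h|^3. \]
The constant $L/2$ does not depend on $t$, $h$, or $\ell$. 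Setting $w = \mu(t+h) - \mu(t) - D\mu(t)h - \tfrac12 D^2\mu(t)h^2 \in \C$, the left-hand side equals $|\ell(w)|$, so $|\ell(w)|\le \tfrac{L}{2}\|\ell\|\,|h|^3$ for every $\ell\in\C^*$. By the Hahn–Banach theorem, $\|w\| = \sup_{\|\ell\|\le 1}|\ell(w)|\le \tfrac{L}{2}|h|^3$, which is the claimed bound, with a constant independent of $t$.

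The argument is essentially routine; the only points requiring care are the identification of the iterated real derivative of $\mu$ with the Fréchet derivatives $D\mu$ and $D^2\mu$ (so the bound matches the notation of the proposition), and the appeal to Hahn–Banach to pass from functional-wise estimates to a norm bound in $\C$. An equivalent route avoids duality by working directly with Bochner integrals: from $D^2\mu$ continuous one gets $\mu(t+h) = \mu(t) + \int_0^h D\mu(t+r)\diff r$ and $D\mu(t+r) = D\mu(t) + \int_0^r D^2\mu(t+u)\diff u$, and substituting and using $\|D^2\mu(t+u)-D^2\mu(t)\|\le L|u|$ yields the same $\Oc(|h|^3)$ remainder with the same $t$-independent constant.
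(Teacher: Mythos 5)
Your argument is correct, and it reaches the same uniform bound as the paper, but by a genuinely different route. The paper applies a vector-valued Taylor theorem directly (Theorem 5.4 of Coleman, 2012), writing $\mu(\tfrac{i}{n})$ as $\mu(t)+D\mu(t)(\tfrac{i}{n}-t)+\int_0^1(1-x)D^2\mu\big(t+x(\tfrac{i}{n}-t)\big)(\tfrac{i}{n}-t)^2\diff x$, and then extracts $\tfrac12 D^2\mu(t)(\tfrac{i}{n}-t)^2$ and bounds the remaining integral by the Lipschitz continuity of $D^2\mu$, giving a constant independent of $t$ and $i/n$. You instead scalarize: test against $\ell\in\C^*$, apply the classical one-variable Taylor formula with integral remainder to $\ell\circ\mu$ (using the chain rule and the identification of $\mathcal{L}(\R,\C)$ with $\C$), bound the remainder by $\tfrac{L}{2}\|\ell\|\,|h|^3$ via the same Lipschitz estimate, and recover the norm bound by Hahn--Banach. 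What this buys is self-containedness -- you avoid citing a Banach-space Taylor theorem and use only scalar calculus plus a standard duality fact -- at the price of the extra identification bookkeeping and the duality step; the paper's route is shorter once the vector-valued theorem is available. Your sketched alternative via Bochner integrals and the vector-valued fundamental theorem of calculus is essentially the paper's argument, so it is good that you flag it as equivalent. In all versions the constant $L/2$ is independent of $t$, which is exactly the uniformity the proposition asserts.
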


\begin{proof}
	By Theorem 5.4 of \cite{coleman2012}, it holds
	\begin{align}\label{eq:int_remainder}
		\mu\big(\tfrac{i}{n}\big)  = &~ \mu(t) + D \mu(t) \big(\tfrac{i}{n}-t\big) + \int_0^1 (1-x) D^2 \mu\Big(t+ x \big(\tfrac{i}{n}-t\big)\Big) \big(\tfrac{i}{n}-t\big)^2 \diff x  \notag \\
		= &~ \mu(t) + D \mu(t) \big(\tfrac{i}{n}-t\big) + \tfrac{1}{2} D^2 \mu(t) \big(\tfrac{i}{n}-t\big)^2 \\
		& +  \int_0^1 (1-x) \big(\tfrac{i}{n}-t\big)^2 \Big[ D^2 \mu\Big(t+ x \big(\tfrac{i}{n}-t\big)\Big) - D^2 \mu(t)\Big] \diff x. \notag
	\end{align}  
	By Lipschitz continuity of $D^2 \mu$, 
	\[  \| D^2 \mu\Big(t+ x \big(\tfrac{i}{n}-t\big)\Big) - D^2 \mu(t) \| \le C \big|\tfrac{i}{n}-t\big|. \]
	Therefore, the integral on the right-hand side of \eqref{eq:int_remainder} can be bounded by $C \big|\tfrac{i}{n}-t\big|^3$, for some constant $C$ that does not depend on $\tfrac{i}{n}$ and $t$.
\end{proof}

\textbf{Proof of Theorem \ref{thm:lle}}~~ Define $\sigma(v)=\sup_{x\in[0, 1]} v(x)$ for $v\in \C$. By Corollary 2.3 of \cite{carcamo2020}, the Gateaux-derivative of $\sigma$ is given by $\diff \sigma(v; \psi) = \sup_{x\in \Ec(v)} \psi(x) = \sup_{\Ec(v)} \psi$, for
\[ \Ec(v) = \{ x\in[0, 1]: v(x) = \sup_{y\in[0, 1]} v(y) \}. \] 

Further, define
\[ g(b) = \frac{1}{nh_n}\sum_{i=1}^{n} \Big(X_{i} - b_0 - b_1 \big(\tfrac{i}{n}-t\big) \Big)^2 K\big(\tfrac{i-nt}{nh_n}\big). \]

Then, to calculate $\hat{\mu}_{h_n}(t)$, we need to minimize $f(b) = (\sigma \circ g) (b)$. 
By the chain rule,  $\diff f(b; \psi) = \diff \sigma\big(g(b); \diff g(b; \psi)\big)$. 
By straightforward calculations, it follows
\[ \diff g(b;\psi) = - 2 \frac{1}{nh_n}\sum_{i=1}^{n} \Big(X_{i} - b_0 - b_1 \big(\tfrac{i}{n}-t\big) \Big) \Big(\psi_0 + \psi_1 \big(\tfrac{i}{n}-t\big) \Big) K\big(\tfrac{i-nt}{nh_n}\big). \]

Thus, we obtain the derivative
\begin{align*}
	\diff f(b; \psi) & = \sup_{\Ec(g(b))} - 2 \frac{1}{nh_n} \sum_{i=1}^{n} \Big(X_{i} - b_0 - b_1 \big(\tfrac{i}{n}-t\big) \Big) \Big(\psi_0 + \psi_1 \big(\tfrac{i}{n}-t\big) \Big) K\big(\tfrac{i-nt}{nh_n}\big)\\
	& = \sup_{\Ec(g(b))} - 2 \psi_0 \big[R_0(t)-b_0 S_0(t) - b_1  h_n S_1(t) \big]
	- 2 \psi_1 h_n \big[R_1(t)-b_0 S_1(t) - b_1 h_n S_2(t) \big],
\end{align*}

where 
\begin{equation}\label{eq:s_r}
	S_\ell(t) = \frac{1}{nh_n} \sum_{i=1}^{n} (\tfrac{i-nt}{nh_n}\big)^\ell K\big(\tfrac{i-nt}{nh_n}\big)
	\quad\text{and}\quad
	R_\ell(t) = \frac{1}{nh_n} \sum_{i=1}^{n} X_i (\tfrac{i-nt}{nh_n}\big)^\ell K\big(\tfrac{i-nt}{nh_n}\big),
\end{equation}
for $\ell \in \{0, 1, 2\}$ and $t\in[0, 1]$. By convexity of $f$ and Proposition \ref{prop:convex_min}, any $b\in \C^2$ with $\diff f(b; \psi)=0$ for any $\psi\in \C^2$ is a global minimum of $f$.

Note that $S_\ell(t)\in\R$ and $R_\ell(t)\in \C$, for $t\in[0, 1]$. Further note, that $\diff f(b; \psi) = 0$ for any $\psi\in \C^2$, if 
\[ R_\ell(t)-b_0 S_\ell(t) - b_1  h_n S_{\ell+1}(t) = 0 \]
for $\ell\in\{0, 1\}$, which can be rewritten as 
\begin{align*}
	\left(\begin{array}{c} b_0 \\ b_1 \end{array}\right)
	= \left(\begin{array}{cc}
		S_0(t) & h_n S_1(t)\\
		S_1(t) & h_n S_2(t)
	\end{array}\right)^{-1}  \left(\begin{array}{c}
		R_0(t)\\
		R_1(t)\\
	\end{array}\right).
\end{align*} 

By using the Taylor expansion from Proposition \ref{prop:taylor}, we obtain
\begin{align}\notag 
	\sup_{t\in I_n} \bigg\| & \hat{\mu}_{h_n}(t) - \mu(t) - h_n \tfrac{S_2(t) S_1(t)-S_1^2(t)}{S_0(t)S_2(t)-S_1^2(t)} D\mu(t) -\tfrac{h_n^2}{2}  \tfrac{S_2^2(t) -S_1(t)S_3(t)}{S_0(t)S_2(t)-S_1^2(t)} D^2 \mu(t)  \\ 
	& - \frac{1}{nh_n} \sum_{i=1}^n \tfrac{S_2(t)-S_1(t)(\tfrac{i-nt}{nh_n})}{S_0(t)S_2(t)-S_1^2(t)}  K\big(\tfrac{i-nt}{nh_n}\big) \eps_i \bigg\| = \Oc(h_n^3). \label{eq:taylor_expansion}
\end{align}

By Lipschitz continuity of $K$, it holds $\sup_{t\in I_n}|S_\ell(t) - \int_{-1}^1 y^\ell K(y)\diff y | = \Oc(\tfrac{1}{nh_n})$. In particular, $S_0(t)=1 + \Oc(\tfrac{1}{nh_n})$, and by symmetry of $K$, $S_\ell(t) = \Oc(\tfrac{1}{nh_n})$, for $\ell=1, 3$. Further, recall that $\kappa_2 = \int_{-1}^1 y^2 K(y) \diff y$. With this notation, \eqref{eq:taylor_expansion} simplifies to 
\begin{equation} \label{eq:taylor_expansion2}
	\sup_{t\in I_n} \bigg\| \hat{\mu}_{h_n}(t) - \mu(t) - \tfrac{h_n^2}{2} \kappa_2 D^2 \mu(t)  \\ 
	- \frac{1}{nh_n} \sum_{i=1}^n \tfrac{S_2(t)-S_1(t)(\tfrac{i-nt}{nh_n})}{S_0(t)S_2(t)-S_1^2(t)}  K\big(\tfrac{i-nt}{nh_n}\big) \eps_i \bigg\| = \Oc(h_n^3+\tfrac{1}{n}).
\end{equation}

Note that $\sum_{i=1}^n K^\ell\big(\tfrac{i-nt}{nh_n}\big)$ contains only $2nh_n+1$ non-zero summands, so that it is of order $\Oc(nh_n)$, uniformly for in $t \in I_n$ and $\ell\in\R$.
Further, 
\[\sup_{t\in I_n} \bigg|\frac{S_2(t)-S_1(t)(\tfrac{i-nt}{nh_n})}{S_0(t)S_2(t)-S_1^2(t)} - 1\bigg| =  \Oc(\tfrac{1}{nh_n}).\]
Thus, for $p >1$, it follows from the triangle and Hölder's inequality
\begin{align*}
	& \ex\bigg[ \sup_{t\in I_n}  \bigg\| \frac{1}{nh_n} \sum_{i=1}^n \Big( \tfrac{S_2(t)-S_1(t)(\tfrac{i-nt}{nh_n})}{S_0(t)S_2(t)-S_1^2(t)} - 1 \Big)  K\big(\tfrac{i-nt}{nh_n}\big) \eps_i  \bigg\|^p \bigg] \\
	& \le \ex\bigg[ \sup_{t\in I_n} \bigg(\frac{1}{nh_n} \sum_{i=1}^n \Big| \tfrac{S_2(t)-S_1(t)(\tfrac{i-nt}{nh_n})}{S_0(t)S_2(t)-S_1^2(t)} - 1 \Big| K\big(\tfrac{i-nt}{nh_n}\big)  \| \eps_i \|\bigg)^p \bigg] \\
	& \le\frac{1}{(nh_n)^p} \sum_{i=1}^n  \ex[\| \eps_i \|^p] \sup_{t\in I_n}  \bigg( \sum_{i=1}^n \Big| \tfrac{S_2(t)-S_1(t)(\tfrac{i-nt}{nh_n})}{S_0(t)S_2(t)-S_1^2(t)} - 1 \Big|^{\frac{p}{p-1}}
	 \Big(K\big(\tfrac{i-nt}{nh_n}\big)\Big)^{\frac{p}{p-1}} \bigg)^{p-1}  
	 = \Oc\Big(\tfrac{1}{n^p h_n^{p+1}}\Big),
\end{align*}
where we used part \ref{assump:errors} of Assumption \ref{assump} in the last step. For $p=1$, we can use the same arguments while replacing the last supremum by 
\[ \sup_{t\in I_n} \max_{i=1}^n \Big| \tfrac{S_2(t)-S_1(t)(\tfrac{i-nt}{nh_n})}{S_0(t)S_2(t)-S_1^2(t)} - 1 \Big|
K\big(\tfrac{i-nt}{nh_n}\big) = \Oc(\tfrac{1}{nh_n}). \]

In particular, \eqref{eq:taylor_expansion2} further simplifies to 
\begin{equation*} 
	\sup_{t\in I_n} \bigg\| \hat{\mu}_{h_n}(t) - \mu(t) - \tfrac{h_n^2}{2} \kappa_2 D^2 \mu(t)   
	- \frac{1}{nh_n} \sum_{i=1}^n  K\big(\tfrac{i-nt}{nh_n}\big) \eps_i \bigg\| = \Oc(h_n^3) + \Oc_\pr\Big(\tfrac{1}{nh_n^{1+1/p}}\Big).
\end{equation*}

By the same arguments, it follows
\begin{equation*} 
	\sup_{t\in I_n} \bigg\| h_n\big(\widehat{D\mu}_{h_n}(t) - D\mu(t)\big) - \tfrac{h_n^2}{2} \frac{\kappa_3}{\kappa_2} D^2 \mu(t)  \\ 
	- \frac{1}{nh_n} \sum_{i=1}^n \frac{1}{\kappa_2}\big(\tfrac{i-nt}{nh_n}\big) K\big(\tfrac{i-nt}{nh_n}\big) \eps_i \bigg\| = \Oc(h_n^3) + \Oc_\pr\Big(\tfrac{1}{nh_n^{1+1/p}}\Big).
\end{equation*}

The theorem follows from the definition of the Jackknife estimators. \hfill$\blacksquare$

\bibliography{bibliography}

\newpage

\appendix


\section{Additional Empirical Results} \label{app:appendix_empirical_results}

\begin{table}[h]
	\caption{Mean processing times of the compared estimators for different choices of $\mu$ and $\eps$ in ms.}
	\label{tab:times}
	\begin{tabular}{l|lll|lll|lll|lll}
		& \multicolumn{3}{c|}{50} & \multicolumn{3}{c|}{100} & \multicolumn{3}{c|}{200} & \multicolumn{3}{c}{500} \\
		$\eps$ & $\tilde{\mu}_n$ & $\hat{\mu}_{h_n}$ & $\hat{\mu}_{NW}$ & $\tilde{\mu}_n$ & $\hat{\mu}_{h_n}$ & $\hat{\mu}_{NW}$ & $\tilde{\mu}_n$ & $\hat{\mu}_{h_n}$ & $\hat{\mu}_{NW}$ & $\tilde{\mu}_n$ & $\hat{\mu}_{h_n}$ & $\hat{\mu}_{NW}$ \\
		\midrule
		\addlinespace[.2cm]
		\multicolumn{13}{l}{\quad\textit{Panel A: $\mu_1$ }} \\
		(BB) & 0.86 & 0.43 & 0.21 & 1.14 & 0.58 & 0.27 & 1.83 & 0.98 & 0.42 & 3.41 & 1.83 & 0.79 \\
		(BM) & 0.85 & 0.44 & 0.21 & 1.14 & 0.56 & 0.26 & 1.68 & 0.83 & 0.36 & 3.47 & 1.74 & 0.76 \\
		(FAR-BB) & 0.86 & 0.43 & 0.21 & 1.16 & 0.57 & 0.27 & 1.65 & 0.81 & 0.36 & 3.45 & 1.74 & 0.74 \\
		(FAR-BM) & 0.87 & 0.43 & 0.20 & 1.15 & 0.56 & 0.26 & 1.75 & 0.87 & 0.37 & 3.38 & 1.67 & 0.71 \\
		(tvBM) & 0.87 & 0.43 & 0.20 & 1.18 & 0.59 & 0.27 & 1.66 & 0.95 & 0.41 & 3.40 & 1.75 & 0.75 \\
		(tvFAR1) & 0.89 & 0.43 & 0.20 & 1.10 & 0.57 & 0.26 & 1.65 & 0.89 & 0.37 & 3.37 & 1.63 & 0.71 \\
		(tvFAR2) & 0.87 & 0.42 & 0.20 & 1.12 & 0.55 & 0.25 & 1.68 & 0.82 & 0.35 & 3.34 & 1.66 & 0.71 \\
		\addlinespace[.2cm]
		\multicolumn{13}{l}{\quad\textit{Panel B: $\mu_2$ }} \\
		(BB) & 0.87 & 0.43 & 0.21 & 1.13 & 0.57 & 0.26 & 1.66 & 0.81 & 0.37 & 3.49 & 1.74 & 0.75 \\
		(BM) & 0.87 & 0.43 & 0.21 & 1.12 & 0.56 & 0.26 & 1.69 & 0.82 & 0.36 & 3.53 & 1.74 & 0.76 \\
		(FAR-BB) & 0.87 & 0.43 & 0.20 & 1.15 & 0.57 & 0.26 & 1.65 & 0.81 & 0.36 & 3.60 & 1.71 & 0.73 \\
		(FAR-BM) & 0.87 & 0.43 & 0.20 & 1.14 & 0.55 & 0.25 & 1.65 & 0.80 & 0.35 & 3.38 & 1.68 & 0.73 \\
		(tvBM) & 0.88 & 0.43 & 0.21 & 1.15 & 0.58 & 0.27 & 1.64 & 0.80 & 0.36 & 3.47 & 1.83 & 0.76 \\
		(tvFAR1) & 0.85 & 0.43 & 0.20 & 1.13 & 0.56 & 0.26 & 1.64 & 0.81 & 0.35 & 3.39 & 1.70 & 0.71 \\
		(tvFAR2) & 0.88 & 0.43 & 0.20 & 1.14 & 0.55 & 0.25 & 1.64 & 0.81 & 0.36 & 3.45 & 1.67 & 0.72 \\
	\end{tabular}
\end{table}

\begin{table}
	\caption{MAE of the compared estimators for different choices of $\mu$ and $\eps$.}
	\label{tab:results2}
	\begin{tabular}{r|rrrrrr|rrrrrr}
		& \multicolumn{6}{c|}{$\mu_1$} & \multicolumn{6}{c}{$\mu_2$} \\
		$n$ & $\tilde{\mu}_n$ & $\hat{\mu}_{h_n}$ & $\hat{\mu}_{NW}$ & $\widetilde{D\mu}_n$ & $\widehat{D\mu}_{h_n}$ & $\widehat{D\mu}_{NW}$ & $\tilde{\mu}_n$ & $\hat{\mu}_{h_n}$ & $\hat{\mu}_{NW}$ & $\widetilde{D\mu}_n$ & $\widehat{D\mu}_{h_n}$ & $\widehat{D\mu}_{NW}$ \\
		\midrule
		\addlinespace[.2cm]
		\multicolumn{13}{l}{\quad\textit{Panel A: (BM)}} \\
		50 & 0.35 & \textbf{0.19} & \textbf{0.19} & 26.57 & 3.20 & \textbf{2.61} & 0.29 & \textbf{0.19} & 0.20 & 15.09 & 3.35 & \textbf{3.08} \\
		100 & 0.21 & \textbf{0.15} & 0.16 & 12.42 & 3.59 & \textbf{3.04} & 0.21 & \textbf{0.16} & \textbf{0.16} & 12.29 & 3.64 & \textbf{3.19} \\
		200 & 0.19 & \textbf{0.13} & \textbf{0.13} & 17.10 & 4.12 & \textbf{3.43} & 0.19 & \textbf{0.13} & \textbf{0.13} & 17.08 & 4.18 & \textbf{3.60} \\
		500 & 0.15 & \textbf{0.10} & \textbf{0.10} & 20.21 & 5.04 & \textbf{4.20} & 0.15 & \textbf{0.10} & \textbf{0.10} & 20.15 & 5.07 & \textbf{4.22} \\
		\addlinespace[.2cm]
		\multicolumn{13}{l}{\quad\textit{Panel B: (BB)}} \\
		50 & 0.18 & \textbf{0.11} & \textbf{0.11} & 11.49 & 1.73 & \textbf{1.42} & 0.17 & \textbf{0.12} & \textbf{0.12} & 8.71 & 2.24 & \textbf{2.15} \\
		100 & 0.12 & \textbf{0.09} & \textbf{0.09} & 7.18 & 1.96 & \textbf{1.64} & 0.13 & \textbf{0.10} & \textbf{0.10} & 7.18 & 2.23 & \textbf{2.02} \\
		200 & 0.11 & \textbf{0.08} & \textbf{0.08} & 9.93 & 2.39 & \textbf{1.97} & 0.11 & \textbf{0.08} & \textbf{0.08} & 9.90 & 2.41 & \textbf{2.06} \\
		500 & 0.09 & \textbf{0.06} & \textbf{0.06} & 11.79 & 2.92 & \textbf{2.43} & 0.09 & \textbf{0.06} & \textbf{0.06} & 11.72 & 2.93 & \textbf{2.44} \\
		\addlinespace[.2cm]
		\multicolumn{13}{l}{\quad\textit{Panel C: (FAR-BM)}} \\
		50 & 0.35 & \textbf{0.25} & 0.27 & 16.49 & 4.96 & \textbf{4.53} & 0.36 & \textbf{0.25} & 0.27 & 16.72 & 4.61 & \textbf{4.52} \\
		100 & 0.28 & \textbf{0.22} & 0.24 & 16.77 & \textbf{6.27} & 6.66 & 0.28 & \textbf{0.23} & 0.25 & 16.61 & 7.52 & \textbf{7.21} \\
		200 & 0.26 & \textbf{0.21} & 0.22 & 25.05 & 11.00 & \textbf{10.59} & 0.26 & \textbf{0.21} & 0.22 & 24.59 & 12.28 & \textbf{11.61} \\
		500 & 0.21 & \textbf{0.19} & \textbf{0.19} & 36.10 & 24.50 & \textbf{20.07} & 0.22 & \textbf{0.19} & 0.20 & 39.12 & 24.98 & \textbf{21.87} \\
		\addlinespace[.2cm]
		\multicolumn{13}{l}{\quad\textit{Panel D: (FAR-BB)}} \\
		50 & 0.19 & \textbf{0.13} & \textbf{0.13} & 9.31 & 2.10 & \textbf{1.86} & 0.19 & \textbf{0.14} & \textbf{0.14} & 9.30 & \textbf{2.42} & 2.53 \\
		100 & 0.15 & \textbf{0.11} & \textbf{0.11} & 8.25 & 2.85 & \textbf{2.24} & 0.15 & \textbf{0.12} & \textbf{0.12} & 8.23 & 2.96 & \textbf{2.88} \\
		200 & 0.13 & \textbf{0.09} & 0.10 & 11.92 & 3.32 & \textbf{3.15} & 0.13 & \textbf{0.10} & \textbf{0.10} & 11.69 & 3.65 & \textbf{3.21} \\
		500 & 0.11 & \textbf{0.07} & \textbf{0.07} & 14.77 & 4.26 & \textbf{3.64} & 0.11 & \textbf{0.08} & \textbf{0.08} & 14.88 & 4.40 & \textbf{3.89} \\
		\addlinespace[.2cm]
		\multicolumn{13}{l}{\quad\textit{Panel E: (tvBM)}} \\
		50 & 0.29 & \textbf{0.19} & \textbf{0.19} & 14.87 & 3.40 & \textbf{2.68} & 0.30 & \textbf{0.20} & \textbf{0.20} & 15.66 & 3.66 & \textbf{3.12} \\
		100 & 0.22 & \textbf{0.16} & \textbf{0.16} & 12.85 & 4.21 & \textbf{3.09} & 0.21 & \textbf{0.16} & \textbf{0.16} & 12.34 & 3.83 & \textbf{3.29} \\
		200 & 0.19 & \textbf{0.13} & \textbf{0.13} & 17.48 & 4.23 & \textbf{3.48} & 0.19 & \textbf{0.13} & \textbf{0.13} & 17.21 & 4.35 & \textbf{3.63} \\
		500 & 0.15 & \textbf{0.10} & \textbf{0.10} & 20.40 & 5.13 & \textbf{4.24} & 0.15 & \textbf{0.10} & \textbf{0.10} & 20.22 & 5.13 & \textbf{4.28} \\
		\addlinespace[.2cm]
		\multicolumn{13}{l}{\quad\textit{Panel F: (tvFAR1)}} \\
		50 & 0.41 & \textbf{0.25} & 0.28 & 19.95 & 5.33 & \textbf{5.07} & 0.36 & \textbf{0.25} & 0.27 & 16.68 & \textbf{4.68} & 4.82 \\
		100 & 0.28 & \textbf{0.23} & 0.25 & 16.25 & \textbf{7.77} & 8.00 & 0.29 & \textbf{0.25} & \textbf{0.25} & 16.96 & 9.31 & \textbf{7.32} \\
		200 & 0.26 & \textbf{0.20} & 0.23 & 26.36 & \textbf{10.74} & 12.17 & 0.26 & \textbf{0.20} & 0.22 & 27.74 & \textbf{10.64} & 11.54 \\
		500 & 0.22 & \textbf{0.19} & 0.20 & 39.10 & \textbf{22.22} & 23.35 & 0.22 & \textbf{0.19} & 0.20 & 38.98 & 25.03 & \textbf{21.39} \\
		\addlinespace[.2cm]
		\multicolumn{13}{l}{\quad\textit{Panel G: (tvFAR2)}} \\
		50 & 0.36 & \textbf{0.26} & 0.27 & 17.10 & 5.16 & \textbf{4.55} & 0.36 & \textbf{0.25} & 0.27 & 16.44 & \textbf{4.16} & 4.74 \\
		100 & 0.30 & \textbf{0.22} & 0.25 & 18.96 & \textbf{6.23} & 7.40 & 0.29 & \textbf{0.23} & 0.24 & 17.16 & 7.54 & \textbf{7.02} \\
		200 & 0.26 & \textbf{0.22} & \textbf{0.22} & 26.09 & 12.99 & \textbf{11.11} & 0.26 & \textbf{0.21} & 0.23 & 26.67 & \textbf{11.01} & 11.75 \\
		500 & 0.21 & \textbf{0.19} & 0.20 & 36.03 & \textbf{21.31} & 22.53 & 0.22 & \textbf{0.17} & 0.21 & 40.62 & \textbf{16.39} & 24.67 \\
	\end{tabular}
\end{table}

\end{document}